\newtheorem{theorem}{Theorem}
\newtheorem{lemma}[theorem]{Lemma}
\newtheorem{proposition}[theorem]{Proposition}
\newtheorem{corollary}[theorem]{Corollary}
\theoremstyle{definition}
\newtheorem{example}[theorem]{Example}
\begin{document}

\title[Invariant theory of bicommutative algebras]
{Invariant theory of free bicommutative algebras}

\author[Vesselin Drensky]
{Vesselin Drensky}
\address{Institute of Mathematics and Informatics,
Bulgarian Academy of Sciences,
1113 Sofia, Bulgaria}
\email{drensky@math.bas.bg}

\subjclass[2010]{17A36; 17A30; 17A50; 13A50; 15A72.}
\keywords{Free bicommutative algebras, noncommutative invariant theory.}

\maketitle

\centerline{\it Dedicated to Alberto Elduque on the occasion of his 60th birthday}

\begin{abstract}
The variety $\mathfrak B$ of bicommutative algebras consists of all nonassociative algebras satisfying the polynomial identities
of right- and left-commutativity $(x_1x_2)x_3=(x_1x_3)x_2$ and $x_1(x_2x_3)=x_2(x_1x_3)$.
Let $F_d({\mathfrak B})$ be the free $d$-generated bicommutative algebra over a field $K$ of characteristic 0.
We study the algebra $F_d({\mathfrak B})^G$ of invariants of a subgroup $G$ of the general linear group $GL_d(K)$.
When $G$ is finite we search for analogies of classical results of invariant theory of finite groups acting on polynomial algebras:
the Endlichkeitssatz of Emmy Noether, the Molien formula and the Chevalley-Shephard-Todd theorem
and show the similarities and the differences in the case of bicommutative algebras.
We also describe the symmetric polynomials in $F_d({\mathfrak B})$.
\end{abstract}

\section{Introduction}

A nonassociative algebra $R$ over a field $K$ is called right-commutative if it satisfies the polynomial identity $(x_1x_2)x_3=(x_1x_3)x_2$.
Similarly we define left-commutative algebras. Algebras which are both right- and left-commutative are called bicommutative.
The algebra $R$ is right-symmetric if it satisfies the polynomial identity
$(x_1,x_2,x_3)=(x_1x_3,x_2)$, where $(x_1,x_2,x_3)=(x_1x_2)x_3-x_1(x_2x_3)$ is the associator.
For the first time oneside-commutative and oneside-symmetric algebras appear in the paper by Cayley \cite{Ca} in 1857.
Left-commutative algebras which are right-symmetric are Novikov (or Gelfand-Dorfman-Novikov) algebras.
These algebras or their opposite were introduced independently by Gelfand and Dorfman \cite{GD}
in their algebraic approach to the Hamiltonian operator in mechanics
and by Balinskii and Novikov \cite{BN} in relation with hydrodynamics.
The study of bicommutative algebras started by Dzhumadil'daev and Tulenbaev \cite{DT} in 2003,
see also Dzhumadil'daev Ismailov and Tulenbaev \cite{DIT}
and under the name LR-algebras by Burde, Dekimpe and Deschamps \cite{BDD} in 2009.
Both Novikov and bicommutative algebras have a rich theory
which demonstrates the powerful methods of combinatorial nonassociative ring theory.
Additionally, there are nice history and beautiful combinatorics
including the realization of free Novikov and bicommutative algebras in terms of rooted trees,
see the introductions of the papers
by Dzhumadil'daev and L\"ofwall \cite{DL} and the recent preprint by Bokut, Chen and Zhang \cite{BCZ}.

Dzhumadil'daev and Tulenbaev \cite{DT} proved that the square $F_d^2({\mathfrak B})$ is a commutative and associative algebra.
Hence it is naturally to expect that the free bicommutative algebra $F_d({\mathfrak B})$ has many properties typical for polynomial algebras.
The idea to apply methods of commutative algebra in the study of polynomial identities of bicommutative algebras
was developed by Drensky and Zhakhayev \cite{DZ} and further in \cite{D3}.
Recently Shestakov and Zhang \cite{SZ} studies automorphisms of free bicommutative algebras in the spirit of automorphisms of polynomial algebras.
Bai, Chen and Zhang \cite{BaCZ} showed that a finitely generated bicommutative algebra has a finite Gr\"obner-Shirshov basis
and its Gelfand-Kirillov dimension is a nonnegative integer.

In the sequel we shall assume that $K$ is a field of characteristic 0.
In classical invariant theory the general linear group $GL_d(K)$ acts canonically on the $d$-dimensional vector space
$V_d$ with basis $\{v_1,\ldots,v_d\}$ and on the polynomial algebra $K[X_d]=K[x_1,\ldots,x_d]$ in $d$ variables by
\[
g(f(v))=f(g^{-1}(v)),\quad g\in GL_d(K), v\in V_d,
\]
and the functions $x_i:V_d\to K$ are defined by
\[
x_i(v)=\xi_i,\quad i=1,\ldots,d,\text{ where }v=\sum_{j=1}^d\xi_jv_j,\xi_j\in K, j=1,\dots,d.
\]
In one of the main branches of noncommutative invariant theory one replaces the polynomial algebra $K[X_d]$ with the free algebra $F_d({\mathfrak V})$
of a variety of algebras $\mathfrak V$.
For our purposes it is more convenient to assume that $GL_d(K)$ acts canonically on the vector space $KX_d$ with basis $X_d=\{x_1,\ldots,x_d\}$
and this action is extended diagonally on the algebra $K[X_d]$ by
\[
g(f(X_d))=g(f(x_1,\ldots,x_d))=f(g(x_1),\ldots,g(x_d)),\quad g\in GL_d(K), f(X_d)\in K[X_d].
\]
For a subgroup $G$ of $GL_d(K)$ the algebra of $G$-invariants is
\[
K[X_d]^G=\{f(X_d)\in K[X_d]\mid g(f)=f\text{ for all } g\in G\}.
\]
Similarly, we assume that the $d$-generated free algebra $F_d({\mathfrak V})$ in a variety $\mathfrak V$ is generated by $X_d$,
with the diagonal action of $GL_d(K)$ on $F_d({\mathfrak V})$ and the algebra of $G$-invariants of a subgroup $G$ of $GL_d(K)$ is
\[
F_d({\mathfrak V})^G=\{f\in F_d({\mathfrak V})\mid g(f)=f\text{ for all } g\in G\}.
\]
Since $\text{char}(K)=0$, $F_d({\mathfrak V})$ is a $\mathbb Z$-graded vector space and the same holds for $F_d({\mathfrak V})^G$.
The Hilbert (or Poincar\'e) series of $F_d^G({\mathfrak V})$ is the formal power series
\[
H(F_d({\mathfrak V})^G,t)=\sum_{n\geq 0}\dim(F_d^{(n)}({\mathfrak V})^G)t^n,
\]
where $F_d^{(n)}({\mathfrak V})^G$ is the homogeneous component of degree $n$ in $F_d({\mathfrak V})^G$.

The following problems are among the main ones in commutative invariant theory:
\begin{itemize}
\item Find classes of groups $G$ for which the algebra of invariants $K[X_d]^G$ is finitely generated.
Find explicitly subgroups $G$ of $GL_d(K)$ with the property that $K[X_d]^G$ is not finitely generated.
\item When $K[X_d]^G$ is finitely generated find its generators and the defining relations between them.
\item Find conditions which guarantee that $K[X_d]^G$ is isomorphic to a polynomial algebra,
i.e. it has a system of algebraically independent generators.
\item Prove the rationality of the Hilbert series of $K[X_d]^G$. Calculate the explicit form of the Hilbert series.
\end{itemize}

These problems are also in the center of noncommutative invariant theory.
But an essential difference is that the properties of $F_d({\mathfrak V})^G$ depend on two parameters --
the group $G$ and the variety $\mathfrak V$. For an idea for the spirit of the investigations when $K[X_d]$
is replaced by the free associative algebra $K\langle X_d\rangle$
or the relatively free algebra $F_d({\mathfrak V})$ of a variety $\mathfrak V$ of associative algebras see the surveys
by Formanek \cite{F} and the author \cite{D1}. See also the papers by Benanti, Boumova, Drensky, Genov and Koev \cite{BBDGK}
and by Drensky and Hristova \cite{DH} for the computing of the Hilbert series $F_d({\mathfrak V})^G$ when $G$ is one of the classical groups.

In \cite{D2} the author studied invariants of groups $G$ acting on free algebras $F_d({\mathfrak V})$ of varieties of right-symmetric and Novikov algebras.
It has turned out that $F_d({\mathfrak V})^G$ is not finitely generated for a large class of groups which includes all finite groups
if the variety $\mathfrak V$ contains all left-nilpotent of class 3 (i.e. satisfying the identity $x_1(x_2x_3)=0$)
right-symmetric or Novikov algebras.

In the present paper we study the algebra of invariants $F_d({\mathfrak B})^G$ of a finite group $G$ acting on the free bicommutative algebra $F_d({\mathfrak B})$.
Our main results are for  $G\not=\langle 1\rangle$ and are the following:
\begin{itemize}
\item The algebra $F_d({\mathfrak B})^G$ is not finitely generated but $F_d^2({\mathfrak B})^G$ is a finitely generated $K[Y_d,Z_d]^G$-module
under a natural action of $K[Y_d,Z_d]$ on $F_d^2({\mathfrak B})$.
\item We establish an analogue of the Molien formula for the Hilbert series $H(F_d({\mathfrak B})^G,t)$.
\item There is no analogue of the Chevalley-Shephard-Todd theorem and the algebra $F_d({\mathfrak B})^G$ is never free in $\mathfrak B$.
\end{itemize}
As an application of the obtained results for $F_d({\mathfrak B})^G$ we describe the symmetric polynomials in $F_d({\mathfrak B})$.

There are other classes of groups $G$ for which the algebra $K[X_d]^G$ is finitely generated. Such are
the linearly reductive linear algebraic groups acting rationally on $KX_d$ and their maximal unipotent subgroups.
Hence for all such groups some of the results above and especially Theorem \ref{analogue of Endlichkeitssatz} (ii) also hold.

\section{Preliminaries}

In what follows $K$ is a field of characteristic 0 and $F_d({\mathfrak B})$ is the free bicommutative algebra of finite rank $d$
freely generated by $X_d=\{x_1,\ldots,x_d\}$.
Thee fact that $F_d^2({\mathfrak B})$ is a commutative and associative algebra established in \cite{DT} was used in \cite{DZ} for the following realization of $F_d({\mathfrak B})$.

Let $K[Y_d,Z_d]=K[y_1,\ldots,y_d,z_1,\ldots,z_d]$ be the polynomial algebra in $2d$ commutative and associative variables
and let $\omega(K[Y_d])$ and $\omega(K[Z_d])$ be the augmentation ideals (consisting of the polynomials without constant terms) of $K[Y_d]$ and $K[Z_d]$, respectively.
As a vector space the algebra $R_d$ is a direct sum of $\omega(K[Y_d])\omega(K[Z_d])\subset K[Y_d,Z_d]$ and $KX_d$ and has a basis consisting of the monomials in $X_d$ and
\[
Y_d^{\alpha}Z_d^{\beta}=y_1^{\alpha_1}\cdots y_d^{\alpha_d}z_1^{\beta_1}\cdots z_d^{\beta_d},\quad
\vert\alpha\vert=\sum_{i=1}^d\alpha_i>0,\vert\beta\vert=\sum_{j=1}^d\beta_j>0.
\]
The multiplication in $R_d$ is defined by:
\[
\begin{array}{c}
x_ix_j=y_iz_j,\\
\\
x_i(Y_d^{\alpha}Z_d^{\beta})=y_iY_d^{\alpha}Z_d^{\beta},\\
\\
(Y_d^{\alpha}Z_d^{\beta})x_j=Y_d^{\alpha}Z_d^{\beta}z_j,\\
\\
(Y_d^{\alpha}Z_d^{\beta})(Y_d^{\gamma}Z_d^{\delta})=Y_d^{\alpha+\gamma}Z_d^{\beta+\delta}.\\
\end{array}
\]
Clearly, the algebra $R_d$ is generated by the set $X_d$, the multiplication in $R_d^2$ is as in the polynomial algebra $K[Y_d,Z_d]$
and $R_d^2$ has the structure of a $K[X_d]$-bimodule.

\begin{proposition} {\rm (\cite{DZ})}
The algebra $F_d({\mathfrak B})$ is isomorphic to the algebra $R_d$ both as an algebra and as a multigraded vector space
with an isomorphism defined by $x_i\to x_i$, $i=1,\ldots,d$.
\end{proposition}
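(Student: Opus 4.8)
The plan is to produce the isomorphism in one direction from the universal property of the free bicommutative algebra, and then pin down injectivity using the structure theorem of Dzhumadil'daev and Tulenbaev that $F_d^2(\mathfrak B)$ is associative and commutative. First I would check directly that the algebra $R_d$ defined by the displayed multiplication table itself lies in $\mathfrak B$, i.e.\ that it satisfies $(uv)w=(uw)v$ and $u(vw)=v(uw)$. This is a finite case analysis: it suffices to take $u,v,w$ among the generators $x_i$ and the monomials $Y_d^{\alpha}Z_d^{\beta}$, and in each case both sides of each identity collapse to the same element of $K[Y_d,Z_d]$ --- the left-commutativity identity reduces to the commutativity of $K[Y_d]$ (the $y$'s pile up on the left in either order), and the right-commutativity identity reduces symmetrically to the commutativity of $K[Z_d]$. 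Since $R_d$ is generated by $X_d$ and belongs to $\mathfrak B$, the universal property of $F_d(\mathfrak B)$ supplies a surjective homomorphism $\pi\colon F_d(\mathfrak B)\to R_d$ with $\pi(x_i)=x_i$. As $\pi$ fixes the generators it preserves the $\mathbb Z^d$-multigrading, and in particular for each $n$ it restricts to a surjection $F_d^{(n)}(\mathfrak B)\twoheadrightarrow R_d^{(n)}$, where $R_d^{(1)}=KX_d$ and, for $n\ge 2$, $R_d^{(n)}$ has basis $\{Y_d^{\alpha}Z_d^{\beta}:|\alpha|,|\beta|\ge 1,\ |\alpha|+|\beta|=n\}$. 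It therefore remains to prove $\dim F_d^{(n)}(\mathfrak B)\le\dim R_d^{(n)}$ for all $n$.

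For this I would establish a normal form for monomials in $F_d(\mathfrak B)$. Degree $1$ is trivial ($F_d^{(1)}(\mathfrak B)=KX_d$), so fix $n\ge 2$. Every degree-$n$ monomial lies in $F_d^2(\mathfrak B)$, which by \cite{DT} is associative and commutative; combining this with left-commutativity $x_i(uv)=u(x_iv)$ and right-commutativity $(uv)x_j=(ux_j)v$, an induction on $n$ shows that every degree-$n$ monomial is congruent, modulo the identities of $\mathfrak B$, to a ``bisorted'' monomial $w_{A,B}$ depending only on an ordered pair $(A,B)$ of nonempty multisets on $\{1,\dots,d\}$ with $|A|+|B|=n$: choose a seed $x_px_q$ with $p\in A$ and $q\in B$, right-multiply successively by the remaining elements of $B$ and then left-multiply successively by the remaining elements of $A$, where by one-sided commutativity the order of these multiplications is irrelevant and, crucially, the two seed indices get absorbed into $A$ and $B$ so that only the multisets survive. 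Since the number of such pairs $(A,B)$ is $\sum_{a=1}^{n-1}\binom{d+a-1}{a}\binom{d+n-a-1}{n-a}=\dim R_d^{(n)}$, we obtain $\dim F_d^{(n)}(\mathfrak B)\le\dim R_d^{(n)}$, hence equality in every degree, hence $\pi$ is an isomorphism of $\mathbb Z^d$-graded algebras. A clean way to record the injectivity is to define a linear map $\psi\colon R_d\to F_d(\mathfrak B)$ on the basis by $x_i\mapsto x_i$ and $Y_d^{\alpha}Z_d^{\beta}\mapsto w_{A,B}$, verify that it is multiplicative on basis elements (the same bookkeeping), and conclude $\psi\circ\pi=\mathrm{id}$ from the freeness of $F_d(\mathfrak B)$, so that $\pi$ is injective.

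The main obstacle is precisely the normal-form step: showing that an arbitrary bracketing of an arbitrary product of generators rewrites, using only left- and right-commutativity, to a bisorted monomial $w_{A,B}$, and in particular that the seed generators merge with the sorted multisets $A,B$ rather than persisting as extra data (if they did not, the spanning set would be too large and the count would fail). This is exactly where the associativity and commutativity of $F_d^2(\mathfrak B)$ does the heavy lifting: it permits a product of two nontrivial factors to be reorganized at will, so that repeated use of one-sided commutativity can push every generator to its correct side. Once this lemma is in hand, the dimension count and the multigraded refinement of the statement are immediate.
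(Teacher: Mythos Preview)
The paper does not supply its own proof of this proposition: it is quoted as a result of \cite{DZ}, so there is no argument here to compare against. Your proposal is correct and is the natural approach. Verifying that $R_d$ satisfies both one-sided commutativity laws, obtaining the surjection $\pi$ from the universal property, and then matching graded dimensions via a normal form for monomials in $F_d(\mathfrak B)$ is exactly how such realizations are established; the use of the Dzhumadil'daev--Tulenbaev theorem that $F_d^2(\mathfrak B)$ is commutative and associative to collapse an arbitrary bracketing into a bisorted monomial $w_{A,B}$ is the right tool, and your count $\sum_{a=1}^{n-1}\binom{d+a-1}{a}\binom{d+n-a-1}{n-a}=\dim R_d^{(n)}$ is correct. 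The final paragraph, constructing $\psi$ explicitly, is a pleasant redundancy once the dimension count is in hand, but it does make the multiplicativity transparent. This line of argument is in the same spirit as the original proof in \cite{DZ}, which likewise rests on the commutative--associative structure of the square.
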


Till the end of the paper we shall work in $R_d$ instead of in $F_d({\mathfrak B})$ but shall state the results for $F_d({\mathfrak B})$.
We assume that the group $GL_d(K)$ acts on $KY_d$ and $KZ_d$ in the same way as on $KX_d$.

\section{The finite generation problem}

The famous Endlichkeitssatz of Emmy Noether \cite{No} states the following.

\begin{theorem}
For any finite subgroup $G$ of $GL_d(K)$ the algebra of invariants $K[X_d]^G$ is finitely generated.
\end{theorem}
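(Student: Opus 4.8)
The final statement is the classical Endlichkeitssatz of Emmy Noether, so the plan is to reproduce the standard averaging argument. Since $G$ is finite and $\mathrm{char}(K)=0$, the order $|G|$ is invertible in $K$, and we have the Reynolds operator $\rho\colon K[X_d]\to K[X_d]^G$ defined by $\rho(f)=\frac{1}{|G|}\sum_{g\in G}g(f)$. This is a $K[X_d]^G$-linear projection onto $K[X_d]^G$: it fixes $G$-invariant polynomials, and by $G$-linearity (each $g$ acts as an algebra automorphism preserving $K[X_d]^G$) we get $\rho(hf)=h\rho(f)$ for $h\in K[X_d]^G$. The first step is to record these properties.

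Next I would pass to homogeneous invariants. The $GL_d(K)$-action, hence the $G$-action, preserves the standard grading on $K[X_d]$, so $\rho$ is homogeneous of degree $0$ and $K[X_d]^G$ is a graded subalgebra; it therefore suffices to show the graded ideal $I\subset K[X_d]^G$ generated by all homogeneous invariants of positive degree is finitely generated as an ideal of $K[X_d]^G$, and then invoke the graded Nakayama-type argument to conclude that finitely many homogeneous invariants generate $K[X_d]^G$ as a $K$-algebra. The key input here is that the polynomial algebra $K[X_d]$ is Noetherian (Hilbert basis theorem), so the ideal $K[X_d]\cdot I$ of $K[X_d]$ is finitely generated, say by homogeneous invariants $f_1,\dots,f_m$ of positive degree.

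The heart of the argument is then to show these same $f_1,\dots,f_m$ generate $K[X_d]^G$ as a $K$-algebra. One proceeds by induction on degree: given a homogeneous invariant $f$ of positive degree $n$, write $f=\sum_{i=1}^m h_i f_i$ with $h_i\in K[X_d]$ homogeneous of degree $n-\deg f_i<n$; apply the Reynolds operator to both sides and use its $K[X_d]^G$-linearity to get $f=\rho(f)=\sum_{i=1}^m \rho(h_i)\,f_i$, where each $\rho(h_i)$ is now a homogeneous invariant of strictly smaller degree, hence by induction a polynomial in $f_1,\dots,f_m$. The base case $n=0$ is trivial. The main obstacle — such as it is — is purely bookkeeping: one must be careful that the decomposition $f=\sum h_i f_i$ can be taken homogeneous (which follows by extracting the degree-$n$ component, since the $f_i$ are homogeneous) so that the induction on degree is well-founded. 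Nothing in this argument uses the bicommutative structure; it is the commutative associative case that the later sections will contrast with.
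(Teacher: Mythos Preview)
Your argument is correct and is the standard Hilbert--Noether proof via the Reynolds operator and the Hilbert basis theorem. Note, however, that the paper does not prove this statement at all: it is quoted as the classical Endlichkeitssatz of Emmy Noether with a citation to \cite{No}, and is used only as background input for the paper's own results on $F_d(\mathfrak{B})^G$. So there is no ``paper's own proof'' to compare against; you have supplied a valid proof where the paper simply invokes the literature.
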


In the special case of finite groups the following result for $F_d({\mathfrak B})^G$ may be considered as an analogue of the theorem of Emmy Noether.

\begin{theorem}\label{analogue of Endlichkeitssatz}
Let $G$ be a subgroup of $GL_d(K)$.
\begin{itemize}
\item[(i)] If every system of generators of the algebra of invariants $K[X_d]^G$ contains polynomials which are of degree $\geq 2$,
then the algebra of invariants $F_d({\mathfrak B})^G$ is not finitely generated.
\item[(ii)] If the algebra $K[Y_d,Z_d]^G$ is finitely generated, then $F_d^2({\mathfrak B})^G$ is a finitely generated $K[Y_d,Z_d]^G$-module.
\end{itemize}
\end{theorem}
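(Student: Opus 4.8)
The plan is to work inside the model $R_d$ and exploit its decomposition $R_d = KX_d \oplus \omega(K[Y_d])\omega(K[Z_d])$ as a $GL_d(K)$-module, where both summands are invariant subspaces (the group acts on $KX_d$ diagonally, and on the second summand through the induced action on $K[Y_d,Z_d]$). Since $G$ is finite and $\mathrm{char}(K)=0$, taking $G$-invariants is exact, so
\[
R_d^G = (KX_d)^G \oplus \bigl(\omega(K[Y_d])\omega(K[Z_d])\bigr)^G,
\]
and likewise $R_d^2{}^G = \bigl(\omega(K[Y_d])\omega(K[Z_d])\bigr)^G$. The key structural observation is that $\omega(K[Y_d])\omega(K[Z_d])$ is an ideal of $K[Y_d,Z_d]$, in fact $\omega(K[Y_d])\omega(K[Z_d]) = \omega(K[Y_d])\cdot\omega(K[Z_d])$ as a product of ideals, and multiplication in $R_d^2$ agrees with multiplication in $K[Y_d,Z_d]$. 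Thus $R_d^2$ is an ideal of $K[Y_d,Z_d]$, hence a $K[Y_d,Z_d]$-module, and by averaging, $R_d^2{}^G$ is a $K[Y_d,Z_d]^G$-submodule of $R_d^2$.

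For part (ii): by hypothesis $A := K[Y_d,Z_d]^G$ is finitely generated, and by the classical Noether/Hilbert argument $K[Y_d,Z_d]$ is a finitely generated $A$-module (this is the standard proof of the Endlichkeitssatz: $K[Y_d,Z_d]$ is integral over $A$ and finitely generated as a $K$-algebra, hence module-finite over $A$). Since $A$ is Noetherian (finitely generated commutative $K$-algebra), the $A$-submodule $R_d^2{}^G$ of the Noetherian $A$-module $K[Y_d,Z_d]$ is itself finitely generated. This gives the claim; concretely one may start from a finite generating set of $R_d^2 = \omega(K[Y_d])\omega(K[Z_d])$ as a $K[Y_d,Z_d]$-module — the products $y_iz_j$ suffice — apply the Reynolds operator to the generators of $K[Y_d,Z_d]$ over $A$ multiplied by these, and extract a finite set.

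For part (i): suppose toward a contradiction that $F_d({\mathfrak B})^G = R_d^G$ is generated by a finite set $S$. Decompose each element of $S$ along $KX_d \oplus R_d^2$; since the degree-$1$ part of $R_d^G$ equals $(KX_d)^G = K[X_d]_1^G$, the linear components of $S$ span $K[X_d]_1^G$. Now observe that every product of two or more elements of $R_d$ lands in $R_d^2 \subseteq \bigoplus_{n\geq 2}$, and within $R_d$ a product is always of the form $u\cdot v$ with $u,v$ homogeneous; crucially, because $R_d^2\cdot R_d \subseteq R_d^2$ is just polynomial multiplication and $R_d^2$ is square-zero-free but "flat" in the sense that $x_i x_j = y_i z_j$ only produces elements with $|\alpha|=|\beta|=1$, one shows that the subalgebra generated by $S$ has degree-$n$ component in $KX_d$ (for $n=1$) together with a span inside $R_d^2$ that, when projected suitably, can only produce the $G$-invariant polynomials obtainable from the linear invariants. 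Making this precise, the homogeneous degree-$n$ invariants of $K[X_d]^G$ that are \emph{not} in the subalgebra generated by $K[X_d]_1^G$ would have to be "seen" by $S$, forcing $S$ to contain genuinely new elements in arbitrarily high degree — contradicting finiteness.

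The main obstacle is exactly this last point: carefully tracking how multiplication in $R_d$ interacts with the $KX_d$-versus-$R_d^2$ decomposition to show that finitely many generators of $R_d^G$ cannot capture the invariants $K[X_d]^G$ when the latter needs generators of degree $\geq 2$. The cleanest route is probably to note that $R_d/R_d^2 \cong KX_d$ as $G$-modules and that the natural map sending a degree-$n$ homogeneous element of $R_d^2$ to its image $Y_d^\alpha Z_d^\beta \mapsto$ (the monomial $x^{\alpha+\beta}$ of degree $n$ in $K[X_d]$) is a $G$-equivariant \emph{linear} surjection $R_d^{2,(n)} \to K[X_d]^{(n)}$ for $n\geq 2$, but it is not an algebra map — and it is precisely this failure of multiplicativity that prevents the finitely generated $R_d^G$ from mapping onto all of $K[X_d]^G$. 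One shows that the image under this map of the subalgebra of $R_d$ generated by any finite $S$ is contained in $K[X_d]_1 + K[X_d]_1\cdot K[X_d]_1 + \cdots$ only up to a bounded degree beyond which new generators are needed, yielding the contradiction. I expect the bookkeeping of which monomials $Y_d^\alpha Z_d^\beta$ actually arise as products — only those with $|\alpha|\geq 1$ and $|\beta|\geq 1$, and the constraint that each single multiplication raises $|\alpha|$ or $|\beta|$ in a controlled way — to be the delicate part, but it is ultimately a finite-generation-by-degrees argument in the spirit of \cite{D2}.
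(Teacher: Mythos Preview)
Your proposal assumes throughout that $G$ is finite (``Since $G$ is finite and $\mathrm{char}(K)=0$\ldots''), but Theorem~\ref{analogue of Endlichkeitssatz} is stated for an arbitrary subgroup $G\le GL_d(K)$; the finite case is only singled out later in Corollary~\ref{real analogue of Endlichkeitssatz}. This matters for both parts.

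For (ii) your argument goes through module-finiteness of $K[Y_d,Z_d]$ over $A=K[Y_d,Z_d]^G$, which you justify by integrality. But integrality of $K[Y_d,Z_d]$ over its $G$-invariants is a fact about \emph{finite} groups; for, say, a reductive group or a maximal unipotent subgroup (exactly the situations the paper wants to cover, see the last paragraph of the introduction) it is false. The paper's route avoids this entirely and is shorter: $R_d^2=\omega(K[Y_d])\omega(K[Z_d])$ is a $G$-stable ideal of $K[Y_d,Z_d]$, hence $(R_d^2)^G$ is an ideal of $K[Y_d,Z_d]^G$; the latter is finitely generated by hypothesis, hence Noetherian by Hilbert's Basissatz, so the ideal $(R_d^2)^G$ is finitely generated as a module over it. No integrality, no Reynolds operator, no ambient module $K[Y_d,Z_d]$.

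For (i) your sketch has a genuine gap, and the proposed mechanism does not produce a contradiction. The map $R_d\to K[X_d]$ given by $x_i\mapsto x_i$, $Y_d^{\alpha}Z_d^{\beta}\mapsto X_d^{\alpha+\beta}$ \emph{is} an algebra homomorphism (it is the universal map from the free bicommutative algebra on $X_d$ to the commutative associative algebra $K[X_d]$; one checks it directly against the four multiplication rules of $R_d$). So your sentence ``it is precisely this failure of multiplicativity that prevents the finitely generated $R_d^G$ from mapping onto all of $K[X_d]^G$'' is simply wrong: the image of the subalgebra generated by a finite $S$ is the subalgebra of $K[X_d]$ generated by the images of $S$, and nothing forbids this from being all of $K[X_d]^G$. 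Collapsing to total $X$-degree throws away exactly the information needed.

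The paper's argument keeps the bigrading $(\deg_{Y_d},\deg_{Z_d})$. After a linear change so that $(KX_d)^G=KX_m$ with $m<d$, pick a homogeneous $h_0\in K[X_d]^G$ that genuinely involves $x_d$ (this exists by the hypothesis of (i)). Let $k_{\min}=\min\{\deg_{Y_d}f:0\ne f\in (R_d^2)^G\}\ge 1$ and choose $f_0\in (R_d^2)^G$ with $\deg_{Y_d}f_0=k_{\min}$. Then every
\[
v_p(Y_d,Z_d)=f_0(Y_d,Z_d)\,h_0(Z_d)^p,\qquad p=0,1,2,\ldots,
\]
lies in $(R_d^2)^G$, has $\deg_{Y_d}v_p=k_{\min}$, and $\deg_{z_d}v_p\ge p$. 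If $R_d^G$ were generated by $x_1,\ldots,x_m,f_1,\ldots,f_n$ with the $f_j$ bihomogeneous, each $v_p$ would be a linear combination of monomials $\prod y_i^{\alpha_i}\prod f_j^{\gamma_j}\prod z_i^{\beta_i}$; matching $Y_d$-degree to $k_{\min}$ forces $\sum\gamma_j\le 1$, and then $\deg_{z_d}$ of any such monomial is at most $\max_j\deg_{Z_d}f_j$, contradicting $\deg_{z_d}v_p\to\infty$. This construction of an infinite family with \emph{fixed} $Y_d$-degree but \emph{unbounded} $z_d$-degree is the idea your proposal is missing.
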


\begin{proof}
(i) Changing the basis of the vector space $KX_d$ we may assume that
the vector space $(KX_d)^G$ of linear $G$-invariants has a basis $X_m=\{x_1,\ldots,x_m\}$ and $m<d$
because by assumption $K[X_d]^G$ contains polynomials which do not belong to $K[X_m]\subset K[X_d]^G$.
Therefore $K[X_d]^G$ is generated by $x_1,\ldots,x_m$ and by homogeneous polynomials $h(X_d)$
which depend essentially on some of the variables $x_{m+1},\ldots,x_d$.
Let $h_0(X_d)\in K[X_d]^G$ be such a polynomial and let for example it depends also on $x_d$.
As we already stated, we shall work in the algebra $R_d$ instead of in $F_d({\mathfrak B})$. Let us assume that $R_d^G$ is finitely generated.
Hence it has a system of generators consisting of $x_1,\ldots,x_m$ and $f_1(Y_d,Z_d),\ldots,f_n(Y_d,Z_d)\in (R_d^2)^G$.
We may assume that all $f_i(Y_d,Z_d)$ are homogeneous with respect to $Y_d$ and to $Z_d$.
Since $h_0(Y_d)h_0(Z_d)$ belongs to $R_d^G$ and depends on the variables $y_d$ and $z_d$,
it does not belong to the subalgebra of $R_d$ generated by $X_m$ and $R_d^G$ has at least one generator $f_i(Y_d,Z_d)\in (R_d^2)^G$. Let
\[
k_{\max}=\max\{\deg_{Z_d}(f_i(Y_d,Z_d))\mid i=1,\ldots,n\}.
\]
Since $\deg_{Y_d}(f(Y_d,Z_d))\geq 1$ for all nonzero $f(Y_d,Z_d)\in R_d^2$
and the same holds for $\deg_{Z_d}(f_i(Y_d,Z_d))$
\[
k_{\min}=\min\{\deg_{Y_d}(f(Y_d,Z_d))\mid 0\not=f(Y_d,Z_d)\in (R_d^2)^G\}\geq 1
\]
and $\deg_{Y_d}(f_0(Y_d,Z_d))=k$ for some $f_0(Y_d,Z_d)\in (R_d^2)^G$.
Then all polynomials
\[
v_p(Y_d,Z_d)=f_0(Y_d,Z_d)h_0^p(Z_d),\quad p=0,1,2,\ldots,
\]
are $G$-invariants with the property $\deg_{Y_d}(v_p(Y_d,Z_d))=k_{\min}$. Hence they can be expressed as linear combinations of
\[
u_{(\alpha,\beta,\gamma)}(Y_d,Z_d)=\prod_{i=1}^my_i^{\alpha_i}\prod_{j=1}^nf_j^{\gamma_j}(Y_d,Z_d)\prod_{i=1}^mz_i^{\beta_i}
\]
such that
\[
\deg_{Y_d}(u_{(\alpha,\beta,\gamma)})=\sum_{i=1}^m\alpha_i+\sum_{j=1}^n\gamma_j\deg_{Y_d}(f_j(Y_d,Z_d))=k_{\min}.
\]
All $u_{(\alpha,\beta,\gamma)}(Y_d,Z_d)$ are homogeneous both in $Y_d$ and $Z_d$. If $\gamma_1+\cdots+\gamma_n>1$, then
\[
\deg_{Y_d}(u_{(\alpha,\beta,\gamma)}(Y_d,Z_d))\geq 2k_{\min}>k_{\min}
\]
which is impossible. Hence either $\gamma_1+\cdots+\gamma_n=1$ and $\alpha_1=\cdots=\alpha_m=0$
or $\gamma_1=\cdots=\gamma_n=0$ and $\alpha_1+\cdots+\alpha_m=k_{\min}$. In both cases
\[
\deg_{z_d}(u_{(\alpha,\beta,\gamma)}(Y_d,Z_d))\leq \max\{\deg_{Z_d}(f_i(Y_d,Z_d))\mid i=1,\ldots,n\}=k_{\max}.
\]
Since $h_0(X_d)$ essentially depends on $x_d$ we obtain that
$\deg_{z_d}(v_p(Y_d,Z_d))\geq p$ which is impossible for $p>k_{\max}$ and this completes the proof of (i).

(ii) Let the algebra $K[Y_d,Z_d]^G$ be finitely generated. The multiplication in $K[Y_d,Z_d]$ defines a natural action on $F_d^2({\mathfrak B})$:
\[
y_i\ast f(X_d)=x_if(X_d),z_i\ast f(X_d)=f(X_d)x_i,\quad f(X_d)\in F_d^2({\mathfrak B}),i=1,\ldots,d.
\]
Translating in terms of $R_d$ it is sufficient to show that $(R_d^2)^G$ is a finitely generated $K[Y_d,Z_d]^G$-module.
Since $R_d^2$ is an ideal of $K[Y_d,Z_d]$, we obtain that $(R_d^2)^G$ is an ideal of the finitely generated algebra $K[Y_d,Z_d]^G$.
By the Basissatz \cite{H} of Hilbert $(R_d^2)^G$ is a finitely generated ideal, i.e. a finitely generated $K[Y_d,Z_d]^G$-module.
\end{proof}

In order to specify Theorem \ref{analogue of Endlichkeitssatz} in the case of finite groups we need the following easy lemma.

\begin{lemma}\label{integral polynomials}
Let $G$ be a finite subgroup of $GL_d(K)$. Then the algebra $K[Y_d,Z_d]$ is integral over the subalgebra $K[Y_d]^GK[Z_d]^G$.
\end{lemma}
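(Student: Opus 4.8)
The plan is to reduce the statement to the classical fact that a polynomial ring is integral over the subring of invariants of a finite group, applied separately to the $Y_d$-variables and to the $Z_d$-variables, and then to combine the two integral extensions. Write $A=K[Y_d]^GK[Z_d]^G$ for the subalgebra in question.

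First I would establish that each generator $y_i$ is integral over $A$. For this, consider the monic polynomial $p_i(t)=\prod_{g\in G}(t-g(y_i))\in K[Y_d][t]$; this is a genuine polynomial precisely because $G$ is finite. Its coefficients are, up to sign, the elementary symmetric functions of the finite orbit $\{g(y_i)\mid g\in G\}$, hence they are fixed by $G$ and lie in $K[Y_d]^G\subseteq A$. Since $p_i(y_i)=0$, the element $y_i$ is integral over $A$. Replacing $Y_d$ by $Z_d$ and using that $G$ acts on $KZ_d$ in the same way as on $KY_d$, the identical argument shows that each $z_j$ is integral over $K[Z_d]^G\subseteq A$, hence over $A$.

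Finally I would invoke the standard permanence properties of integral dependence. The algebra $K[Y_d,Z_d]$ is generated over $A$ by the finite set $\{y_1,\ldots,y_d,z_1,\ldots,z_d\}$, each member of which is integral over $A$ by the previous step; a commutative algebra finitely generated by integral elements over a subring is a finite module over, and in particular integral over, that subring. Therefore $K[Y_d,Z_d]$ is integral (indeed module-finite) over $A=K[Y_d]^GK[Z_d]^G$, as claimed. There is no real obstacle in this argument; the only point worth stating explicitly is the harmless inclusion of both $K[Y_d]^G$ and $K[Z_d]^G$ in $A$, which lets one upgrade integrality over either of these smaller rings to integrality over $A$ by transitivity of integral dependence, and the fact that finiteness of $G$ is exactly what makes the orbit products $p_i(t)$ polynomials with invariant coefficients.
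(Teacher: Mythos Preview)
Your proof is correct and follows essentially the same approach as the paper: form the orbit polynomial $\prod_{g\in G}(t-g(y_i))$ to show each $y_i$ is integral over $K[Y_d]^G\subseteq A$, do the same for each $z_j$, and then use that the integral elements form a subalgebra containing the generators of $K[Y_d,Z_d]$. The only cosmetic difference is that you phrase the final step via ``finitely generated by integral elements implies module-finite'' while the paper simply says integral elements form a subalgebra, but these are equivalent standard facts.
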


\begin{proof}
Let $G=\{g_1,\ldots,g_n\}$. Then $y_i$ satisfies the equation
\[
\prod_{j=1}^n(x-g_j(y_i))=\sum_{k=0}^n(-1)^ke_k(g_1(y_i),\ldots,g_n(y_i))x^{n-k}=0,
\]
where
\[
e_k(X_n)=e_k(x_1,\ldots,x_n)=\sum_{i_1<\cdots<i_k}x_{i_1}\cdots x_{i_k},\quad k=1,\ldots,n,
\]
are the elementary symmetric polynomials. Since $e_k(g_1(y_i),\ldots,g_n(y_i))\in K[Y_d]^G\subset K[Y_d]^GK[Z_d]^G$,
we obtain that $y_i$ is integral over $K[Y_d]^GK[Z_d]^G$. The same holds for all $z_j$. This completes the proof because the integral elements over $K[Y_d]^GK[Z_d]^G$
form a subalgebra of $K[Y_d,Z_d]$ and $Y_d\cup Z_d=\{y_i,z_j\mid i,j=1,\ldots,d\}$ generate the whole $K[Y_d,Z_d]$.
\end{proof}

\begin{corollary}\label{real analogue of Endlichkeitssatz}
Let $G$ be a nontrivial finite subgroup of $GL_d(K)$. Then
\begin{itemize}
\item[(i)] The algebra of invariants $F_d({\mathfrak B})^G$ is not finitely generated.
\item[(ii)] $F_d^2({\mathfrak B})^G$ is a finitely generated $K[Y_d]^GK[Z_d]^G$-module.
\end{itemize}
\end{corollary}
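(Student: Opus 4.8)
The plan is to obtain both parts by specializing the results already established. For (i) I would invoke Theorem~\ref{analogue of Endlichkeitssatz}(i), so the only thing to verify is that for a nontrivial finite $G$ every system of generators of $K[X_d]^G$ must contain an element of degree $\geq 2$; equivalently, that $K[X_d]^G$ is not generated by its homogeneous components of degree $\leq 1$. For (ii) I would combine Lemma~\ref{integral polynomials} with Noether's Endlichkeitssatz and Hilbert's Basissatz.

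For (i): since $\text{char}(K)=0$, write $KX_d=U\oplus W$, where $U=(KX_d)^G$ is the space of linear invariants and $W$ is a $G$-invariant complement (so $W\cap U=0$). Because $G$ is a nontrivial subgroup of $GL_d(K)$ it acts faithfully and nontrivially on $KX_d$, hence $U\neq KX_d$ and $\dim W\geq 1$. Choosing coordinates adapted to this decomposition gives $K[X_d]=K[U]\otimes_K K[W]$ as $G$-algebras with $G$ acting trivially on $K[U]$, so $K[X_d]^G=K[U]\otimes_K K[W]^G$. Since $K[W]$ is integral over $K[W]^G$, the latter has Krull dimension $\dim W\geq 1$, so $K[W]^G\neq K$ and therefore $K[X_d]^G\supsetneq K[U]$. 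On the other hand, the subalgebra generated by any set of invariants of degree $\leq 1$ is contained in $K[U]$, because each homogeneous component of an invariant is again invariant, so the degree-$1$ part of a generator lies in $U$. Hence no such set generates $K[X_d]^G$, the hypothesis of Theorem~\ref{analogue of Endlichkeitssatz}(i) is satisfied, and $F_d(\mathfrak B)^G$ is not finitely generated.

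For (ii): by Noether's Endlichkeitssatz applied to the actions of $G$ on $KY_d$ and on $KZ_d$, the algebras $K[Y_d]^G$ and $K[Z_d]^G$ are finitely generated, hence so is $S:=K[Y_d]^GK[Z_d]^G\cong K[Y_d]^G\otimes_K K[Z_d]^G$, and by the Basissatz $S$ is Noetherian. By Lemma~\ref{integral polynomials} the algebra $K[Y_d,Z_d]$ is integral over $S$; being also finitely generated as an $S$-algebra (by $y_1,\ldots,z_d$), it is a finitely generated $S$-module. Since $S$ is Noetherian, every $S$-submodule of $K[Y_d,Z_d]$ is finitely generated; applying this to $F_d^2(\mathfrak B)^G\cong(R_d^2)^G$, which is an $S$-submodule of $K[Y_d,Z_d]$, yields (ii). Alternatively, one can first obtain finite generation of $(R_d^2)^G$ over $K[Y_d,Z_d]^G$ from Theorem~\ref{analogue of Endlichkeitssatz}(ii) and then descend, since $K[Y_d,Z_d]^G$ is itself a finitely generated $S$-module by the same argument.

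I do not expect a serious obstacle: the corollary is essentially a clean specialization. The one genuinely nontrivial input is that a nontrivial finite group acting faithfully on a vector space has nonconstant polynomial invariants on the complement of its fixed subspace, which is where $\text{char}(K)=0$ enters (through the existence of the invariant complement and through Noether finiteness); it can be seen directly via orbit sums or, as above, via Krull dimension. Everything else is bookkeeping with Theorem~\ref{analogue of Endlichkeitssatz}, Lemma~\ref{integral polynomials}, and standard commutative algebra.
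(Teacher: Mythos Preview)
Your proposal is correct and follows essentially the same route as the paper: for (i) both you and the paper take a $G$-invariant complement $W$ of $(KX_d)^G$ via Maschke and argue that $K[W]^G\neq K$ to trigger Theorem~\ref{analogue of Endlichkeitssatz}(i); for (ii) both combine Noether's Endlichkeitssatz, Lemma~\ref{integral polynomials}, and the Basissatz to conclude that $(R_d^2)^G$ sits as a submodule of a finitely generated module over the Noetherian ring $K[Y_d]^GK[Z_d]^G$. The only cosmetic difference is that in (ii) the paper passes through the intermediate module $K[Y_d,Z_d]^G$, whereas you note one can work directly with $K[Y_d,Z_d]$ as an $S$-module---a harmless shortcut.
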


\begin{proof}
(i) As in the proof of Theorem \ref{analogue of Endlichkeitssatz} we assume that $(KX_d)^G=KX_m$, $m<d$.
Since $G(KX_m)=KX_m$, by the theorem of Maschke there exists a vector subspace $W$ of $KX_d$ such that $KX_d=KX_m\oplus W$ and $G(W)=W$.
We may assume that $W=K\{x_{m+1},\ldots,x_d\}$.
The group $G$ acts nontrivially on $W$ and hence $K[x_{m+1},\ldots,x_d]^G\not=K$. This immediately implies (i).

(ii) By the Endlichkeitssatz $K[Y_d,Z_d]^G$ is generated by a finite system of polynomials $f_1(Y_d,Z_d),\ldots,f_m(Y_d,Z_d)$. By Lemma \ref{integral polynomials}
each $f_i(Y_d,Z_d)$ is integral over $K[Y_d]^GK[Z_d]^G$. Hence there exists a positive integer $n_i$ such that all $f_i^n(Y_d,Z_d)$ can be expressed as
linear combinations of $f_i^k(Y_d,Z_d)$, $k=0,1,\ldots,n_i$, with coefficients in $K[Y_d]^GK[Z_d]^G$. Therefore $K[Y_d,Z_d]^G$ is generated as a $K[Y_d]^GK[Z_d]^G$-module
by the finite number of products
\[
f_1^{k_1}(Y_d,Z_d)\cdots f_m^{k_m}(Y_d,Z_d),\quad k_i\leq n_i, i=1,\ldots,m.
\]
Since $(R_d^2)^G\subset K[Y_d,Z_d]^G$ is a $K[Y_d]^GK[Z_d]^G$-submodule of the finitely generated $K[Y_d]^GK[Z_d]^G$-module $K[Y_d,Z_d]^G$
it is also finitely generated as a $K[Y_d]^GK[Z_d]^G$-module.
\end{proof}

\section{Analogue of the Molien formula}

The Molien formula \cite{M} gives the Hilbert series of the algebra of invariants of the finite subgroup $G$ of $GL_d(K)$:
\[
H(K[X_d]^G,t)=\frac{1}{\vert G\vert}\sum_{g\in G}\frac{1}{\det(1-gt)}.
\]
It has an analogue for free associative algebras obtained by Dicks and Formanek \cite{DiF} where the determinants are replaced by traces:
\[
H(K\langle X_d\rangle^G,t) = \frac{1}{\vert G\vert}\sum_{g\in G}\frac{1}{1-\text{tr}(g)t}.
\]
The most general case is due to Formanek \cite{F}.

\begin{theorem}\label{Molien formula of Formanek}
Let $G$ be a finite subgroup of $GL_d(K)$ and let
$\xi_1(g),\ldots,\xi_d(g)$ be the eigenvalues of $g \in G$.
If $\mathfrak V$ is a variety of algebras and $H(F_d({\mathfrak V}),t_1,\ldots,t_d)$ is the Hilbert series of
$F_d({\mathfrak V})$ considered as a ${\mathbb Z}^d$-graded vector space, then
the Hilbert series of the algebra $F_d({\mathfrak V})^G$ is
\[
H(F_d({\mathfrak V})^G,t) = \frac{1}{\vert G\vert}\sum_{g\in G}H(F_d({\mathfrak V}),\xi_1(g)t,\ldots,\xi_d(g)t).
\]
\end{theorem}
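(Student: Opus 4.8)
The plan is to reduce the statement to the classical Molien formula applied to a suitable finite-dimensional representation in each multidegree, using only linear algebra and the averaging operator. First I would recall the standard reduction: since $\mathrm{char}(K)=0$ and $G$ is finite, the Reynolds operator $\pi=\frac{1}{|G|}\sum_{g\in G}g$ is a projection of $F_d({\mathfrak V})$ onto $F_d({\mathfrak V})^G$, and it commutes with the $\mathbb Z^d$-grading because $G$ acts diagonally on $KX_d$ and hence preserves each multihomogeneous component $F_d^{(n_1,\dots,n_d)}({\mathfrak V})$. Therefore
\[
\dim F_d^{(n_1,\dots,n_d)}({\mathfrak V})^G=\mathrm{tr}\bigl(\pi\big|_{F_d^{(n_1,\dots,n_d)}({\mathfrak V})}\bigr)=\frac{1}{|G|}\sum_{g\in G}\mathrm{tr}\bigl(g\big|_{F_d^{(n_1,\dots,n_d)}({\mathfrak V})}\bigr),
\]
each component being finite-dimensional so the traces make sense.

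Next I would compute $\mathrm{tr}\bigl(g\big|_{F_d^{(n_1,\dots,n_d)}({\mathfrak V})}\bigr)$ by diagonalizing $g$. After passing to the algebraic closure $\bar K$ (which changes neither dimensions nor the invariant-theoretic data, and where $g$ is diagonalizable since it has finite order and $\mathrm{char}(K)=0$), choose a basis $\{x_1',\dots,x_d'\}$ of $\bar K X_d$ on which $g$ acts by $g(x_i')=\xi_i(g)x_i'$. Changing the free generating set of $F_d({\mathfrak V})\otimes_K\bar K$ from $X_d$ to $X_d'$ does not change the Hilbert series as a $\mathbb Z^d$-graded vector space (it is the Hilbert series of the relatively free algebra, which depends only on the variety and the number of generators, not on the chosen basis). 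In this basis every multihomogeneous component $F_d^{(n_1,\dots,n_d)}({\mathfrak V})$ is spanned by elements that are $g$-eigenvectors with eigenvalue $\xi_1(g)^{n_1}\cdots\xi_d(g)^{n_d}$, so
\[
\mathrm{tr}\bigl(g\big|_{F_d^{(n_1,\dots,n_d)}({\mathfrak V})}\bigr)=\xi_1(g)^{n_1}\cdots\xi_d(g)^{n_d}\cdot\dim F_d^{(n_1,\dots,n_d)}({\mathfrak V}).
\]

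Finally I would assemble the generating function. Multiplying by $t^{n_1+\cdots+n_d}$ and summing over all $(n_1,\dots,n_d)\in\mathbb Z_{\geq 0}^d$, the right-hand side becomes $H(F_d({\mathfrak V}),\xi_1(g)t,\dots,\xi_d(g)t)$ by definition of the $\mathbb Z^d$-graded Hilbert series with $t_i=\xi_i(g)t$, while the left-hand side, after averaging over $G$ via the displayed trace formula and grouping by total degree $n=n_1+\cdots+n_d$, yields exactly $\sum_{n\geq 0}\dim(F_d^{(n)}({\mathfrak V})^G)t^n=H(F_d({\mathfrak V})^G,t)$. This gives the claimed identity. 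The only point requiring care — and the one I would state explicitly rather than gloss over — is the interchange of the (infinite) sum over multidegrees with the finite sum over $G$ and the substitution $t_i\mapsto \xi_i(g)t$; this is justified because for each fixed total degree $n$ only finitely many multidegrees contribute, so the coefficient of $t^n$ on both sides is a finite sum, and the identity is an identity of formal power series in $t$. I expect no serious obstacle: the argument is the standard Dicks–Formanek–Formanek trace argument, and the role of the variety $\mathfrak V$ is entirely passive, entering only through the numbers $\dim F_d^{(n_1,\dots,n_d)}({\mathfrak V})$.
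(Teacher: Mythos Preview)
Your argument contains one genuine slip, and once it is fixed the proof goes through. You assert that the Reynolds operator $\pi$ preserves each multihomogeneous component $F_d^{(n_1,\dots,n_d)}({\mathfrak V})$ because ``$G$ acts diagonally on $KX_d$''. But $G$ is an \emph{arbitrary} finite subgroup of $GL_d(K)$, not a group of diagonal matrices; a general $g\in G$ sends $x_i$ to a linear combination of the $x_j$'s and therefore mixes multidegrees. (The phrase ``extended diagonally'' in the paper refers to extending the action from generators to the whole algebra, not to $G$ consisting of diagonal matrices.) What $G$ does preserve is the total degree, so the correct first step is
\[
\dim F_d^{(n)}({\mathfrak V})^G=\frac{1}{|G|}\sum_{g\in G}\mathrm{tr}\bigl(g\big|_{F_d^{(n)}({\mathfrak V})}\bigr).
\]
Your second step then applies to each fixed $g$: pass to $\bar K$, choose an eigenbasis $X_d'$ for $g$, use that the multigraded Hilbert series of $F_d({\mathfrak V})$ is independent of the free generating set, and observe that in this basis $g$ acts on $F_d^{(n_1,\dots,n_d)}({\mathfrak V})$ (now graded with respect to $X_d'$) as the scalar $\xi_1(g)^{n_1}\cdots\xi_d(g)^{n_d}$. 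This gives
\[
\mathrm{tr}\bigl(g\big|_{F_d^{(n)}({\mathfrak V})}\bigr)=\sum_{n_1+\cdots+n_d=n}\xi_1(g)^{n_1}\cdots\xi_d(g)^{n_d}\dim F_d^{(n_1,\dots,n_d)}({\mathfrak V}),
\]
and the assembly into the generating-function identity proceeds exactly as you wrote. So the only repair is to apply the Reynolds projection at the level of total degree and to bring in the ${\mathbb Z}^d$-grading only after fixing $g$ and diagonalizing it.

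As for comparison with the paper: the paper does not prove this theorem. It is quoted as Formanek's result \cite{F} and stated without proof. Your corrected argument is the standard one and would serve as the omitted proof.
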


The next theorem is the analogue of the Molien formula for the algebra of invariants of a finite group acting on a free bicommutative algebra.

\begin{theorem}
For a finite subgroup $G$ of $GL_d(K)$ the Hilbert series of the algebra $F_d({\mathfrak B})^G$ is
\[
H(F_d({\mathfrak B})^G,t)=\frac{1}{\vert G\vert}\sum_{g\in G}\left(\left(\frac{1}{\det(1-gt)}-1\right)^2+\text{\rm tr}(g)t\right).
\]
\end{theorem}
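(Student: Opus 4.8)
The plan is to apply the general Molien formula of Formanek (Theorem \ref{Molien formula of Formanek}) to the variety $\mathfrak B$, which requires computing the $\mathbb Z^d$-graded Hilbert series of $F_d(\mathfrak B)\cong R_d$. Using the explicit basis of $R_d$ described in the preliminaries, I would split $R_d=KX_d\oplus\omega(K[Y_d])\omega(K[Z_d])$ as a multigraded vector space. The linear part $KX_d$ contributes $t_1+\cdots+t_d$. For the quadratic-and-higher part, note that the monomials $Y_d^\alpha Z_d^\beta$ with $|\alpha|>0$ and $|\beta|>0$ are graded by assigning $y_i$ and $x_i$ the same weight (and likewise $z_i$), so in the variables $t_1,\ldots,t_d$ this component has Hilbert series $\left(\prod_i\frac1{1-t_i}-1\right)^2$, i.e. the product of the Hilbert series of $\omega(K[Y_d])$ and of $\omega(K[Z_d])$ after identifying the $y$- and $z$-gradings with the $x$-grading. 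Hence
\[
H(F_d(\mathfrak B),t_1,\ldots,t_d)=\left(\prod_{i=1}^d\frac1{1-t_i}-1\right)^2+\sum_{i=1}^d t_i.
\]

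Next I would substitute $t_i\mapsto \xi_i(g)t$ as prescribed by Theorem \ref{Molien formula of Formanek}. The key identity is that $\prod_{i=1}^d\frac1{1-\xi_i(g)t}=\frac1{\det(1-gt)}$, since the $\xi_i(g)$ are precisely the eigenvalues of $g$ and $\det(1-gt)=\prod_i(1-\xi_i(g)t)$. Also $\sum_{i=1}^d\xi_i(g)t=\operatorname{tr}(g)t$. Therefore
\[
H(F_d(\mathfrak B),\xi_1(g)t,\ldots,\xi_d(g)t)=\left(\frac1{\det(1-gt)}-1\right)^2+\operatorname{tr}(g)t,
\]
and averaging over $g\in G$ with weight $1/|G|$ yields exactly the claimed formula.

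The only genuinely substantive point — and the one I would be most careful about — is justifying the multigraded Hilbert series of $R_d$, specifically the claim that the $\mathbb Z^d$-grading on $R_d^2$ inherited from $F_d(\mathfrak B)$ matches the one on $\omega(K[Y_d])\omega(K[Z_d])$ under which $y_i$ and $z_i$ each carry the degree of $x_i$. This follows because Proposition of \cite{DZ} asserts the isomorphism $F_d(\mathfrak B)\cong R_d$ is multigraded, and the defining relations $x_ix_j=y_iz_j$, $x_i(Y_d^\alpha Z_d^\beta)=y_iY_d^\alpha Z_d^\beta$, $(Y_d^\alpha Z_d^\beta)x_j=Y_d^\alpha Z_d^\beta z_j$ are consistent with assigning $y_i,z_i$ the weight of $x_i$; in particular every monomial $Y_d^\alpha Z_d^\beta$ with $|\alpha|,|\beta|>0$ arises and corresponds to a unique multidegree. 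Once this bookkeeping is settled, the rest is the routine substitution described above. A minor remark worth including is that the formula is a rational function of $t$, since each summand is, confirming the rationality of $H(F_d(\mathfrak B)^G,t)$.
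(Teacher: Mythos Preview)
Your argument is correct, but it takes a different route from the paper's own proof. You invoke Formanek's general Molien formula (Theorem~\ref{Molien formula of Formanek}) as a black box: first compute the $\mathbb Z^d$-graded Hilbert series of $F_d(\mathfrak B)$, then substitute eigenvalues and average. The paper, by contrast, does \emph{not} use Theorem~\ref{Molien formula of Formanek} at all. Instead it works directly with the Reynolds operator to handle the linear part $(KX_d)^G$, and for $(R_d^2)^G$ it exploits the $G$-stable decomposition $K[Y_d,Z_d]=K[Y_d]\oplus\omega(K[Z_d])\oplus\omega(K[Y_d])\omega(K[Z_d])$, applying the \emph{classical} Molien formula separately to $K[Y_d,Z_d]^G$, $K[Y_d]^G$ and $K[Z_d]^G$ and then subtracting. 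Your approach is quicker and illustrates why Formanek's theorem was worth quoting; the paper's approach is more self-contained (it needs only the polynomial-ring Molien formula and Maschke's theorem) and makes the $G$-module structure of $R_d$ inside $K[Y_d,Z_d]$ more visible. Your careful check that the multigrading on $R_d^2$ matches the one on $\omega(K[Y_d])\omega(K[Z_d])$ with $y_i,z_i$ both carrying the weight of $x_i$ is exactly the point needed, and it is sound.
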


\begin{proof}
Working in $R_d$, the algebra $R_d^G$ has a vector space decomposition
\[
R_d^G=(KX_d)^G\oplus (R_d^2)^G\text{ and }(R_d^2)^G\subset K[Y_d,Z_d]^G.
\]
By the properties of the Reynolds operator
\[
\rho=\frac{1}{\vert G\vert}\sum_{g\in G}g
\]
applied to a $G$-module $V$, we obtain that $V^G=\rho(V)$.
When the $G$-module $V$ is finite dimensional the theorem of Maschke gives
$V=\rho(V)\oplus\text{Ker}(\rho)=V^G\oplus\text{Ker}(\rho)$.
Since $\rho$ acts as the identity operator on $V^G$, we have that $\dim(V^G)=\text{tr}(\rho)$.
Hence
\[
\dim((KX_d)^G)=\frac{1}{\vert G\vert}\sum_{g\in G}\text{\rm tr}(g).
\]
We split $K[Y_d,Z_d]$ into a direct sum of three $G$-invariant subspaces:
\[
K[Y_d,Z_d]=K[Y_d]\oplus\omega(K[Z_d])\oplus\omega(K[Y_d])\omega(K[Z_d]).
\]
This implies the decomposition
\[
K[Y_d,Z_d]^G=K[Y_d]^G\oplus\omega(K[Z_d])^G\oplus(\omega(K[Y_d])\omega(K[Z_d]))^G
\]
which gives for the Hilbert series
\[
H(K[Y_d,Z_d]^G,t)=H(K[Y_d]^G,t)+H(\omega(K[Z_d])^G,t)+H((\omega(K[Y_d])\omega(K[Z_d]))^G,t)
\]
\[
=H(K[Y_d]^G,t)+(H(K[Z_d]^G,t)-1)+H((R_d^2)^G,t),
\]
\[
H((R_d^2)^G,t)=H(K[Y_d,Z_d]^G,t)-H(K[Y_d]^G,t)-(H(K[Z_d]^G,t)-1)
\]
\[
=\frac{1}{\vert G\vert}\sum_{g\in G}\frac{1}{\det((1-gt)^2)}-\frac{2}{\vert G\vert}\sum_{g\in G}\frac{1}{\det(1-gt)}+1
\]
\[
=\frac{1}{\vert G\vert}\sum_{g\in G}\left(\frac{1}{\det(1-gt)}-1\right)^2.
\]
This completes the proof of the theorem because
\[
H(F_d({\mathfrak B})^G,t)=H(R_d^G,t)=\dim((KX_d)^G)t+H((R_d^2)^G,t)
\]
\[
=\frac{1}{\vert G\vert}\sum_{g\in G}\left(\left(\frac{1}{\det(1-gt)}-1\right)^2+\text{\rm tr}(g)t\right).
\]
\end{proof}

\section{The Chevalley-Shephard-Todd theorem}

The Chevalley-Shephard-Todd theorem describes the algebras of invariants which have a system of algebraically independent generators.
\begin{theorem}
If $G$ is a finite subgroup of $GL_d(K)$, then $K[X_d]^G\cong K[Y_d]$ if and only if $G< GL_d(K)$ is generated by pseudo-reflections
(matrices of finite multiplicative order with all but one eigenvalues equal to $1$).
\end{theorem}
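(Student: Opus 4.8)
The statement is the classical Chevalley-Shephard-Todd theorem, and the plan is to prove its two implications separately: the forward implication by Chevalley's differentiation argument, the converse by a Hilbert-series computation resting on the Molien formula.

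For ``$G$ generated by pseudo-reflections $\Rightarrow K[X_d]^G$ a polynomial algebra'' I would let $\mathfrak I$ be the ideal of $K[X_d]$ generated by all invariants of positive degree, choose a minimal homogeneous generating set $f_1,\dots,f_m$ of $\mathfrak I$, and prove two things: that the $f_i$ are algebraically independent over $K$, and that $K[X_d]^G=K[f_1,\dots,f_m]$. The latter is routine: given a homogeneous invariant $h$ of positive degree, write $h=\sum a_if_i$ with $a_i\in K[X_d]$ of smaller degree, apply the Reynolds operator $\rho=\frac{1}{|G|}\sum_{g\in G}g$ to get $h=\sum\rho(a_i)f_i$ with $\rho(a_i)$ again invariant of smaller degree, and induct; then $m=d$ because $K[X_d]$ is integral over $K[X_d]^G$, so the two rings have the same Krull dimension $d$. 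Algebraic independence is the core: assuming a nonzero homogeneous relation $P(f_1,\dots,f_m)=0$ of least degree, differentiate with respect to each $x_j$ to obtain $\sum_i(\partial P/\partial u_i)(f_1,\dots,f_m)\,(\partial f_i/\partial x_j)=0$, and feed these identities into the lemma that for homogeneous invariants $q_1,\dots,q_r$ with $q_1$ not lying in the ideal of $K[X_d]^G$ generated by $q_2,\dots,q_r$, every homogeneous relation $\sum b_iq_i=0$ with $b_i\in K[X_d]$ forces $b_1\in\mathfrak I$. The pseudo-reflection hypothesis enters exactly in the proof of this lemma, an induction on $\deg b_1$ via $\rho$: for a pseudo-reflection $s$ with reflecting hyperplane $\{\ell=0\}$ one has that $b-s(b)$ is divisible by $\ell$, and this divisibility is the only point in the whole argument where the reflection structure is used.

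For the converse ``$K[X_d]^G$ polynomial $\Rightarrow G$ generated by pseudo-reflections'' I would compute with Hilbert series. Writing $K[X_d]^G=K[f_1,\dots,f_d]$ with $\deg f_i=d_i$, so that $H(K[X_d]^G,t)=\prod_{i=1}^d(1-t^{d_i})^{-1}$, I would equate this with the Molien formula $|G|^{-1}\sum_{g\in G}\det(1-gt)^{-1}$ and expand both sides in powers of $1-t$ near $t=1$: the coefficient of $(1-t)^{-d}$ gives $|G|=\prod_{i=1}^d d_i$, and the coefficient of $(1-t)^{-(d-1)}$ gives that the number $a$ of pseudo-reflections in $G$ equals $\sum_{i=1}^d(d_i-1)$ (here $g=1$ contributes only to the order-$d$ pole, a pseudo-reflection and its inverse together contribute $1$ to the coefficient of $(1-t)^{-(d-1)}$, and every other element fixes a subspace of codimension $\geq 2$ and contributes to neither coefficient). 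Let $H\trianglelefteq G$ be the subgroup generated by all pseudo-reflections of $G$, normal since conjugates of pseudo-reflections are pseudo-reflections. By the implication already proved, $K[X_d]^H=K[g_1,\dots,g_d]$ with $\deg g_i=e_i$, and $\sum_{i=1}^d(e_i-1)$ equals the number of pseudo-reflections of $H$, which is the same $a=\sum_{i=1}^d(d_i-1)$; hence $\sum e_i=\sum d_i$. Since $K[X_d]^G\subseteq K[X_d]^H$ and $K[X_d]^H$ is a polynomial ring (hence Cohen-Macaulay) that is module-finite over the polynomial ring $K[X_d]^G$, it is a finitely generated free graded $K[X_d]^G$-module, so $P(t)=H(K[X_d]^H,t)/H(K[X_d]^G,t)=\prod_{i=1}^d\frac{1-t^{d_i}}{1-t^{e_i}}$ is a polynomial. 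But $P(0)=1$, and as a rational function $P$ has degree $\sum d_i-\sum e_i=0$, so $P$ is the constant $1$; hence $[G:H]=P(1)=1$, i.e.\ $G=H$.

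The main obstacle is the algebraic-independence step in the forward implication: it is Chevalley's theorem proper, and the unique place where the pseudo-reflection hypothesis does genuine work, through the divisibility of $b-s(b)$ by the linear form defining the reflecting hyperplane. Everything else --- the inductive identification of $K[X_d]^G$, the dimension count, and the entire Hilbert-series bookkeeping of the converse --- is formal once the Molien formula and the freeness of $K[X_d]^H$ over a polynomial invariant subring are available.
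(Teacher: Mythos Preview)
The paper does not prove this statement: it is quoted as the classical Chevalley--Shephard--Todd theorem (with the original sources \cite{Ch} and \cite{ST} listed in the bibliography) and serves only as background for the subsequent negative result that no analogue holds for $F_d(\mathfrak B)^G$. There is therefore no ``paper's own proof'' to compare against.

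Your sketch is correct and is precisely the standard argument. The forward implication is Chevalley's proof: the key lemma (if $q_1$ is not in the ideal of $K[X_d]^G$ generated by $q_2,\dots,q_r$, then any homogeneous relation $\sum b_iq_i=0$ forces $b_1\in\mathfrak I$) is exactly where the pseudo-reflection hypothesis enters, via the divisibility of $b-s(b)$ by the linear form of the reflecting hyperplane, and the rest --- the Reynolds-operator induction showing $K[X_d]^G=K[f_1,\dots,f_m]$ and the Krull-dimension count giving $m=d$ --- is formal. Your converse via Laurent expansion of the Molien formula at $t=1$ is also the standard route: the identification $|G|=\prod d_i$ from the order-$d$ pole, the count $r=\sum(d_i-1)$ of pseudo-reflections from the order-$(d-1)$ pole, and the comparison with the normal subgroup $H$ generated by pseudo-reflections (using freeness of $K[X_d]^H$ over $K[X_d]^G$ to conclude that $P(t)=\prod(1-t^{d_i})/(1-t^{e_i})$ is a polynomial of degree~$0$ with constant term~$1$, hence $[G:H]=P(1)=1$) are all correct. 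The only point worth tightening in a full write-up is the contribution of a single pseudo-reflection $s$ with nontrivial eigenvalue $\lambda$: it contributes $1/(1-\lambda)$ to the coefficient of $(1-t)^{-(d-1)}$, and the pairing $s\leftrightarrow s^{-1}$ (or $\lambda=-1$ when $s$ has order~$2$) gives total contribution $r/2$, matching $\sum(d_i-1)/2$ on the other side.
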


The following result shows that there is no analogue of the Chevalley-Shephard-Todd theorem for free bicommutative algebras.

\begin{theorem} Let $1\not=G\subseteq GL_d(K)$ be a finite group. Then
the algebra $F_d({\mathfrak B})^G$ is not isomorphic to a free bicommutative algebra.
\end{theorem}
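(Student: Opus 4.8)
The plan is to derive a contradiction from the assumption that $F_d({\mathfrak B})^G$ is free in $\mathfrak B$, i.e. isomorphic to $F_e({\mathfrak B})$ for some rank $e$. The starting point is Corollary \ref{real analogue of Endlichkeitssatz}(i): since $G\not=\langle 1\rangle$ is finite, $F_d({\mathfrak B})^G$ is not finitely generated. On the other hand, a free bicommutative algebra $F_e({\mathfrak B})$ is finitely generated precisely when $e$ is finite, so we must have $e=\infty$, i.e. $F_d({\mathfrak B})^G\cong F_{\infty}({\mathfrak B})$, the free bicommutative algebra of countable rank. So the real task is to show that $F_d({\mathfrak B})^G$, although not finitely generated, still cannot be free of infinite rank.

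To do this I would compare structural invariants of the two algebras that are preserved by isomorphism. The most effective one here is the square. For any bicommutative algebra $R$ the square $R^2$ is a commutative associative algebra (this is the Dzhumadil'daev--Tulenbaev result quoted in the introduction), and if $R\cong F_e({\mathfrak B})$ then by the $R_e$-model $R^2\cong \omega(K[Y_e])\omega(K[Z_e])$, which as an ideal of $K[Y_e,Z_e]$ is an integral domain (indeed $K[Y_e,Z_e]$ itself is a domain). So if $F_d({\mathfrak B})^G$ were free, then $(F_d({\mathfrak B})^G)^2 = (R_d^2)^G$ would have to be an integral domain. But $(R_d^2)^G$ is an ideal of $K[Y_d,Z_d]^G$; more concretely, working inside $R_d$ one checks that the product of two elements $f,h\in (R_d^2)^G$ is just the ordinary polynomial product in $K[Y_d,Z_d]$, and $(R_d^2)^G$ contains nonzero elements (e.g. products $y_iz_j$-type invariants obtained by applying the Reynolds operator, which are nonzero since $G$ is finite and $\mathrm{char}\,K=0$). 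The key point to exploit is that, unlike $\omega(K[Y_e])\omega(K[Z_e])$, the algebra $(R_d^2)^G$ has a different \emph{grading gap structure}: every nonzero element of $\omega(K[Y_d])\omega(K[Z_d])$ has $\deg_{Y_d}\geq 1$ and $\deg_{Z_d}\geq 1$, and the same holds after taking $G$-invariants, but a free square $\omega(K[Y_e])\omega(K[Z_e])$ with $e$ infinite would, under any algebra isomorphism to $(R_d^2)^G$, have to match bidegree-supports, and one can show no such matching exists because the bigraded Hilbert series of $(R_d^2)^G$ (computable by the Molien-type argument of the previous section) is not of the form $(P(s)-1)(Q(t)-1)$ with $P,Q$ power series of the shape $1/\det(1-g\,\cdot\,)$.

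Concretely, the cleanest route is: (1) suppose $\varphi: F_\infty({\mathfrak B})\to F_d({\mathfrak B})^G$ is an isomorphism; (2) it restricts to an isomorphism of the squares, hence of commutative associative algebras $\omega(K[Y_\infty])\omega(K[Z_\infty])\to (R_d^2)^G$; (3) the left side is generated as a non-unital algebra by the products $y_iz_j$, which are irreducible in the sense that they cannot be written as a product $ab$ with $a,b$ in the square; (4) but in $(R_d^2)^G$ every element of minimal total degree that lies in the square is a polynomial of degree $2$ in $Y_d\cup Z_d$ (degree $1$ in each), and the space of such invariants is $((KX_d)\ast(KX_d))^G$ up to the module action—crucially these do \emph{not} generate $(R_d^2)^G$ as a non-unital algebra, since $(R_d^2)^G$ is not finitely generated as a $K[Y_d]^GK[Z_d]^G$-module would be forced if it were—wait, it \emph{is} finitely generated as such a module by Corollary \ref{real analogue of Endlichkeitssatz}(ii), but it is \emph{not} finitely generated as an algebra, whereas $\omega(K[Y_\infty])\omega(K[Z_\infty])$ is generated by the infinitely many $y_iz_j$ which are algebra-indecomposable; transporting indecomposability through $\varphi$ forces $(R_d^2)^G$ to contain infinitely many algebra-indecomposable elements of the \emph{same} bidegree—contradicting that in $(R_d^2)^G$ all but finitely many invariants of each bidegree are decomposable (products of lower-degree invariants), which follows from finite module-generation in (ii) together with the fact that $K[Y_d]^G$ and $K[Z_d]^G$ are finitely generated by Noether. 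I would present this last contradiction as the main step.

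The main obstacle I anticipate is making the ``algebra-indecomposable'' bookkeeping precise: one must track degrees in $Y_d$ and $Z_d$ separately and argue that the module generators from (ii), say $w_1,\dots,w_r\in (R_d^2)^G$, together with a fixed finite generating set of the commutative algebra $K[Y_d]^GK[Z_d]^G$, generate $(R_d^2)^G$ as an algebra—so $(R_d^2)^G$ \emph{is} finitely generated as a non-unital algebra, whence an isomorphism with the non-finitely-generated algebra $\omega(K[Y_\infty])\omega(K[Z_\infty])$ is impossible. That is actually the slickest phrasing: Corollary \ref{real analogue of Endlichkeitssatz}(ii) plus Noether's theorem show $(F_d({\mathfrak B})^G)^2$ \emph{is} a finitely generated algebra, while the square of any infinite-rank free bicommutative algebra is not; and $F_d({\mathfrak B})^G$ of finite rank is impossible by Corollary \ref{real analogue of Endlichkeitssatz}(i). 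The delicate point to verify carefully is that $(R_d^2)^G$ genuinely is a finitely generated algebra — i.e. that a finite module basis over a finitely generated subalgebra yields a finite algebra generating set — which is immediate, so in the end the proof is short, and the bulk of the writing is just assembling these three facts.
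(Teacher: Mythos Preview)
Your argument has a genuine gap at the final ``slick'' step. You claim that $(F_d({\mathfrak B})^G)^2$ is a finitely generated commutative associative algebra because it is a finitely generated module over the finitely generated algebra $K[Y_d]^GK[Z_d]^G$. But the implication ``finitely generated module over a finitely generated algebra $\Rightarrow$ finitely generated algebra'' requires the base algebra to sit \emph{inside} the module; here $K[Y_d]^GK[Z_d]^G$ is \emph{not} contained in $(R_d^2)^G$, since its generators have $Y_d$-degree~$0$ or $Z_d$-degree~$0$ and hence lie outside $\omega(K[Y_d])\omega(K[Z_d])$. In fact the conclusion is false: for $d=1$ and $G=\{\pm 1\}$ one has $(R_1^2)^G=\text{span}\{y^az^b: a,b\ge 1,\ a+b\text{ even}\}$, and every $y^{2k-1}z$ is indecomposable (any product of two nonzero elements of $(R_1^2)^G$ has $z$-degree $\ge 2$), so $(R_1^2)^G$ is not finitely generated as an algebra. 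The same obstruction shows that $(F_d({\mathfrak B})^G)^2$ (which you also incorrectly identify with $(R_d^2)^G$; in general $(F_d({\mathfrak B})^G)^2\subsetneq F_d^2({\mathfrak B})^G$) is not finitely generated either.

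The paper's proof avoids this entirely: it shows directly that $F_d({\mathfrak B})$ contains no free bicommutative subalgebra of rank $>d$. Given putative free generators $f_1,\ldots,f_e$ with $e>d$, their linear parts in $KX_d$ are dependent, so after an invertible linear change one may take $f_1\in R_d^2$; then $f_1$ and $f_2f_2$ both lie in the commutative associative algebra $R_d^2$, whence $f_1(f_2f_2)=(f_2f_2)f_1$, contradicting $x_1(x_2x_2)\neq (x_2x_2)x_1$ in any free bicommutative algebra on $\ge 2$ generators. If you want to rescue your ``compare the squares'' strategy, replace ``finitely generated'' by a genuine invariant such as transcendence degree: $(F_d({\mathfrak B})^G)^2$ embeds in $K[Y_d,Z_d]$, so any $2d+1$ of its elements are algebraically dependent, whereas in $F_\infty({\mathfrak B})^2$ the elements $y_1z_1,y_2z_1,y_3z_1,\ldots$ are algebraically independent.
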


\begin{proof}
By Corollary \ref{real analogue of Endlichkeitssatz} the algebra $F_d({\mathfrak B})^G$ is not finitely generated.
Hence for the proof it is sufficient to show that the algebra $F_d({\mathfrak B})$ does not contain subalgebras
isomorphic to $F_e({\mathfrak B})$ for $e>d$. For the proof, let $f_1,\ldots,f_e\in F_d({\mathfrak B})$
generate an $e$-generated free subalgebra of $F_d({\mathfrak B})$, $e>d$.
Working in $R_d$ instead of in $F_d({\mathfrak B})$ and changing linearly the generators of the subalgebra
we may assume that $f_1\in R_d^2$. Since $f_2^2$ also belongs to $R_d^2$ and $R_d^2$ is a commutative and associative algebra
we obtain that $f_1(f_2f_2)=(f_2f_2)f_1$. On the other hand,
$x_1(x_2x_2)=y_1y_2z_2\not=y_2z_1z_2=(x_2x_2)x_1$ in $R_d$.
Therefore $f_1$ and $f_2$ cannot be free generators of a free subalgebra of $F_d({\mathfrak B})$.
\end{proof}

\section{Symmetric polynomials in free bicommutative algebras}

As usually, we assume that the symmetric group $S_d$ acts on $KX_d$ as a group of permutation matrices.
We shall need the following description of the symmetric polynomials $K[Y_d,Z_d]^{S_d}$ in two sets of variables $Y_d$ and $Z_d$.
The general case of symmetric polynomials in any number of sets of variables is given by Schl\"afli \cite{Sch}, see also the books by MacMahon \cite{Mac}
and Weyl \cite{W}.

\begin{theorem}\label{symmetric polynomials in two sets of variables}
The algebra $K[Y_d,Z_d]^{S_d}$ is generated by the elementary symmetric polynomials $e_k(Y_d)$, $e_k(Z_d)$, $k=1,\ldots,d$, and their polarizations
\[
e_{p,q}(Y_d,Z_d)=\sum y_{i_1}\cdots y_{i_p}z_{j_1}\cdots z_{j_q},
\]
where $p,q\geq 1$, $p+q\leq d$, and the summation is on all tuples
$(i_1,\ldots,i_p,j_1,\ldots,j_q)$ of pairwise different integers such that
$1\leq i_1<\cdots < i_p\leq d$,  $1\leq j_1<\cdots<j_q\leq d$.
\end{theorem}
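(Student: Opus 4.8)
The plan is to reduce the statement to the classical theorem of Schläfli on multisymmetric functions by exhibiting the right generators and then verifying they suffice. Concretely, I would set up the problem as follows. The symmetric group $S_d$ acts diagonally on the $2d$ variables $Y_d \cup Z_d$ by simultaneously permuting the $y$'s and the $z$'s by the same permutation; that is, $\sigma$ sends the pair $(y_i,z_i)$ to $(y_{\sigma(i)},z_{\sigma(i)})$. Thus $K[Y_d,Z_d]^{S_d}$ is exactly the algebra of \emph{multisymmetric} (or \emph{diagonally symmetric}) polynomials in $d$ pairs of variables. The classical result of Schläfli (and MacMahon, Weyl) asserts that in characteristic $0$ this algebra is generated by the power sums $p_{a,b}(Y_d,Z_d)=\sum_{i=1}^d y_i^a z_i^b$ with $a,b\geq 0$, $a+b\geq 1$, or equivalently by a suitable finite subset of them. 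I would first restate this in the form I need, and then translate between the power-sum generators and the elementary-type generators $e_k(Y_d)$, $e_k(Z_d)$, $e_{p,q}(Y_d,Z_d)$ appearing in the theorem.

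The first step is to show that the listed polynomials lie in $K[Y_d,Z_d]^{S_d}$ and generate a subalgebra $A$ containing all the power sums $p_{a,b}$. Since $e_k(Y_d)$ and $e_k(Z_d)$ already generate all symmetric functions in the $y$'s alone and in the $z$'s alone respectively (by the fundamental theorem of symmetric functions), $A$ contains every $p_{a,0}$ and $p_{0,b}$. For the mixed power sums $p_{a,b}$ with $a,b\geq 1$, I would argue by induction on $a+b$ using the polarized elementary symmetric functions $e_{p,q}$ together with Newton-type identities: expanding products such as $e_{1,1}\cdot(\text{lower power sums})$ and $e_{p,q}$ themselves in terms of monomial multisymmetric functions, one can solve recursively for $p_{a,b}$ in terms of the $e$'s and $e_{p,q}$'s of total degree $\leq a+b$, provided $a+b\leq d$. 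The restriction $p+q\leq d$ is exactly what guarantees these polarized elementary functions are nonzero and "generic enough"; for $a+b>d$ one uses that $p_{a,b}$ is then already a polynomial in lower power sums (a Cayley–Hamilton / Newton-identity phenomenon, since a point of $(\mathbb A^2)^d/S_d$ is determined by finitely many coordinates). Conversely, each $e_k(Y_d)$, $e_k(Z_d)$, $e_{p,q}$ is manifestly $S_d$-invariant and a polynomial in the $p_{a,b}$'s, so $A$ equals the full power-sum algebra, which by Schläfli is $K[Y_d,Z_d]^{S_d}$.

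The step I expect to be the main obstacle is the careful bookkeeping in the recursion that expresses the mixed power sums $p_{a,b}$ through the given generators, i.e. proving the inclusion $K[Y_d,Z_d]^{S_d}\subseteq A$ rather than the trivial reverse inclusion. The clean way to organize this is: (1) order the monomial multisymmetric functions $m_\lambda$ by a suitable degree-lexicographic order on the indexing multisets of pairs $(a_i,b_i)$; (2) observe that a product of generators of the appropriate total bidegree has a unique "leading" monomial $m_\lambda$; (3) show the leading monomials of products of $e$'s and $e_{p,q}$'s exhaust all $\lambda$ with at most $d$ parts, which is precisely the full set of multisymmetric monomials — here the hypothesis $p+q\le d$ is used to keep the $e_{p,q}$ available for every needed partition. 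Triangularity of the change of basis then gives that every $m_\lambda$, hence every element of $K[Y_d,Z_d]^{S_d}$, lies in $A$. I would present the triangularity argument in detail and cite Schläfli \cite{Sch} (with MacMahon \cite{Mac} and Weyl \cite{W}) for the fact that the $m_\lambda$ with at most $d$ parts span $K[Y_d,Z_d]^{S_d}$ and that the power sums generate it, keeping the Newton-identity computations at the level of a sketch since they are standard.
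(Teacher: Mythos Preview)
The paper does not prove this theorem at all: it is stated as a classical result, attributed to Schl\"afli with pointers to MacMahon and Weyl, and then used as a black box in the proof of the subsequent theorem on symmetric bicommutative polynomials. So there is no ``paper's own proof'' to compare against; your proposal already does strictly more than the paper by sketching an argument.

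As for the sketch itself, the strategy is sound: identify $K[Y_d,Z_d]^{S_d}$ with the ring of multisymmetric polynomials in $d$ pairs, invoke Schl\"afli's theorem that the power sums $p_{a,b}$ (equivalently those with $a+b\le d$, in characteristic~$0$) generate, and then pass between the power-sum and polarized-elementary generating sets. Two places deserve more care if you actually write this up. First, the claim that $p_{a,b}$ with $a+b>d$ lies in the subalgebra generated by lower power sums is precisely the nontrivial content of the Schl\"afli/Junker theorem in characteristic~$0$; you are right to cite it rather than reprove it, but be aware it is not a one-line Newton identity. Second, the triangularity argument---that products of the $e_k(Y_d)$, $e_k(Z_d)$, and $e_{p,q}$ hit every monomial multisymmetric function $m_\lambda$ with at most $d$ parts as a leading term---is the genuine combinatorial step, and the ``unique leading monomial'' claim for arbitrary products is false without a carefully chosen ordering and factorisation scheme; the clean route is to express each $m_\lambda$ (or each $p_{a,b}$ with $a+b\le d$) directly, rather than arguing about arbitrary products. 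With those caveats the plan works, but for the purposes of this paper a citation, as the author gives, is entirely adequate.
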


The following result describes the symmetric polynomials in $F_d({\mathfrak B})$.

\begin{theorem}\label{Symmetric bicommutative polynomails}
The vector space $F_d({\mathfrak B})^{S_d}$ decomposes as
\[
F_d({\mathfrak B})^{S_d}=Ke_1(X_d)\oplus F_d^2({\mathfrak B})^{S_d}.
\]
There exist positive integers $n_{p,q}$ such that, identifying the powers $F_d^2({\mathfrak B})$ and $R_d^2$,
the $K[Y_d]^{S_n}K[Z_d]^{S_d}$-module $(R_d^2)^{S_d}$ is generated by a finite number of products
\[
\prod_{p+q\leq d}e_{p,q}^{k_{p,q}}(Y_d,Z_d),\quad k_{p,q}\leq n_{p,q},\sum k_{p,q}\geq 1,\text{ and }e_p(Y_d)e_q(Z_d),\quad p,q=1,\ldots,d.
\]
\end{theorem}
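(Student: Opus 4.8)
The plan is to combine the structural decomposition of $R_d$ with the description of the invariant ring $K[Y_d,Z_d]^{S_d}$ from Theorem~\ref{symmetric polynomials in two sets of variables} and the module-finiteness statement from Corollary~\ref{real analogue of Endlichkeitssatz}(ii). First I would establish the direct sum $F_d({\mathfrak B})^{S_d}=Ke_1(X_d)\oplus F_d^2({\mathfrak B})^{S_d}$. Working in $R_d$, the decomposition $R_d=KX_d\oplus R_d^2$ is $GL_d(K)$-invariant, hence $S_d$-invariant, so $R_d^{S_d}=(KX_d)^{S_d}\oplus (R_d^2)^{S_d}$. The space $(KX_d)^{S_d}$ of linear symmetric polynomials is one-dimensional, spanned by $x_1+\cdots+x_d=e_1(X_d)$, which gives the first claim once we identify $F_d^2({\mathfrak B})$ with $R_d^2$ via the isomorphism of the opening Proposition.

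Next I would analyze $(R_d^2)^{S_d}$. Recall $R_d^2=\omega(K[Y_d])\omega(K[Z_d])$ as a subspace of $K[Y_d,Z_d]$, with multiplication inherited from the polynomial algebra, and that $S_d$ acts diagonally on $Y_d$ and on $Z_d$ simultaneously (the same permutation matrix). By Corollary~\ref{real analogue of Endlichkeitssatz}(ii), $(R_d^2)^{S_d}$ is a finitely generated module over $K[Y_d]^{S_d}K[Z_d]^{S_d}$, which here is $K[e_1(Y_d),\ldots,e_d(Y_d)]\cdot K[e_1(Z_d),\ldots,e_d(Z_d)]$. It remains to pin down a module generating set. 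For this I invoke Theorem~\ref{symmetric polynomials in two sets of variables}: $K[Y_d,Z_d]^{S_d}$ is generated as a \emph{algebra} by the $e_k(Y_d)$, the $e_k(Z_d)$, and the polarizations $e_{p,q}(Y_d,Z_d)$ with $p,q\geq 1$, $p+q\leq d$. Hence every element of $K[Y_d,Z_d]^{S_d}$ is a $K[Y_d]^{S_d}K[Z_d]^{S_d}$-linear combination of monomials $\prod_{p+q\le d}e_{p,q}^{k_{p,q}}(Y_d,Z_d)$ in the polarizations. The elements of $(R_d^2)^{S_d}$ are exactly the symmetric polynomials lying in $\omega(K[Y_d])\omega(K[Z_d])$, i.e. those that are $\deg_{Y_d}\!\ge 1$ \emph{and} $\deg_{Z_d}\!\ge 1$ in every term; equivalently, $(R_d^2)^{S_d}=K[Y_d,Z_d]^{S_d}\cap \omega(K[Y_d])\omega(K[Z_d])$, and it is an ideal of $K[Y_d,Z_d]^{S_d}$. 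A pure-$Y$ product $\prod e_k(Y_d)^{a_k}$ has $Z$-degree zero, so it lies in $(R_d^2)^{S_d}$ only after being multiplied by something of positive $Z$-degree; thus in the module decomposition only those basis monomials $\prod e_{p,q}^{k_{p,q}}$ with $\sum k_{p,q}\ge 1$ survive, together with the extra generators $e_p(Y_d)e_q(Z_d)$ ($p,q=1,\dots,d$) needed to account for products of a pure-$Y$ symmetric polynomial with a pure-$Z$ one, which individually are not in $R_d^2$ but whose product is.

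To get a \emph{finite} list I need uniform bounds $k_{p,q}\le n_{p,q}$ on the exponents of the polarizations. This is where the integrality argument of Lemma~\ref{integral polynomials} (or its use inside Corollary~\ref{real analogue of Endlichkeitssatz}(ii)) is essential: each generator $e_{p,q}(Y_d,Z_d)$ of $K[Y_d,Z_d]^{S_d}$ is integral over $K[Y_d]^{S_d}K[Z_d]^{S_d}$, so some power $e_{p,q}^{n_{p,q}+1}$ equals a $K[Y_d]^{S_d}K[Z_d]^{S_d}$-combination of lower powers; applying this relation repeatedly reduces every monomial in the polarizations to one with all exponents $\le n_{p,q}$, at the cost of coefficients in $K[Y_d]^{S_d}K[Z_d]^{S_d}$. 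Intersecting with $R_d^2$ and bookkeeping the $Z$-degree as above then yields precisely the asserted finite generating set. The main obstacle I anticipate is the careful handling of the boundary terms — products of pure-$Y$ and pure-$Z$ symmetric polynomials and monomials $\prod e_{p,q}^{k_{p,q}}$ with $\sum k_{p,q}=0$ — which do not themselves lie in $R_d^2$; one must verify that multiplying them into $R_d^2$ forces a factor that is already captured by the $e_p(Y_d)e_q(Z_d)$ together with the $K[Y_d]^{S_d}K[Z_d]^{S_d}$-action, so that no further generators are needed. The rest is a routine degree-tracking argument once Theorem~\ref{symmetric polynomials in two sets of variables} and the integrality bound are in hand.
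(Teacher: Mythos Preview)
Your proposal is correct and follows essentially the same route as the paper: decompose $R_d^{S_d}=(KX_d)^{S_d}\oplus(R_d^2)^{S_d}$, use Theorem~\ref{symmetric polynomials in two sets of variables} to span $(R_d^2)^{S_d}$ by products $\prod e_p^{l_p}(Y_d)\prod e_{p,q}^{k_{p,q}}(Y_d,Z_d)\prod e_q^{m_q}(Z_d)$ of positive $Y$- and $Z$-degree, observe that this yields the module generators $\prod e_{p,q}^{k_{p,q}}$ with $\sum k_{p,q}\ge 1$ together with the products $e_p(Y_d)e_q(Z_d)$, and finally invoke finiteness. The only cosmetic difference is that the paper appeals directly to Corollary~\ref{real analogue of Endlichkeitssatz}(ii) for the exponent bounds $n_{p,q}$, whereas you unpack that step by citing the integrality of each $e_{p,q}$ over $K[Y_d]^{S_d}K[Z_d]^{S_d}$ from Lemma~\ref{integral polynomials}; the content is identical, and the ``boundary'' case you flag is handled exactly by the extra generators $e_p(Y_d)e_q(Z_d)$, so no genuine obstacle remains.
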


\begin{proof}
Since $e_1(X_d)$ is the only symmetric linear polynomial in $F_d({\mathfrak B})$ it is sufficient to prove the statement for $(R_d^2)^{S_d}$.
By Theorem \ref{symmetric polynomials in two sets of variables} $(R_d^2)^{S_d}$ is spanned by the products
\[
\prod_{p=1}^de_p^{l_p}(Y_d)\prod_{p+q\leq d}e_{p,q}^{k_{p,q}}(Y_d,Z_d)\prod_{q=1}^de_q^{m_q}(Z_d)
\]
which depend both on $Y_d$ and $Z_d$. Hence $(R_d^2)^{S_d}$ is generated as a $K[Y_d]^{S_d}K[Z_d]^{S_d}$-module by the products
\[
\prod_{p+q\leq d}e_{p,q}^{k_{p,q}}(Y_d,Z_d)\text{ and }e_p(Y_d)e_q(Z_d).
\]
Now the proof follows from Corollary \ref{real analogue of Endlichkeitssatz} (ii).
\end{proof}

\begin{example}
Applying Theorem \ref{symmetric polynomials in two sets of variables} for $d=2$ we obtain that the algebra $K[Y_2,Z_2]^{S_2}$ is generated by
\[
e_1(Y_2),e_2(Y_2),e_1(Z_2),e_2(Z_2),e_{1,1}(Y_2,Z_2)=y_1z_2+y_2z_1.
\]
Direct computations show that
\[
e_{1,1}^2(Y_2,Z_2)=e_1(Y_2)e_1(Z_2)e_{1,1}(Y_2,Z_2)-e_1^2(Y_2)e_2(Z_1)-e_2(Y_2)e_1^2(Z_2)+4e_2(Y_2)e_2(Z_2).
\]
Hence as a $K[Y_2]^{S_2}K[Z_2]^{S_2}$-module $K[Y_2,Z_2]^{S_2}$ is generated by 1 and $e_{1,1}(Y_2,Z_2]$
and by Theorem \ref{Symmetric bicommutative polynomails} $(R_2^2)^{S_2}$ is generated
as a $K[Y_2]^{S_2}K[Z_2]^{S_2}$-module by $e_{1,1}(Y_2,Z_2)$ and the products $e_p(Y_2)e_q(Z_2)$, $p,q=1,2$.
\end{example}

\end{document}